\newtheorem{theorem}{Theorem}[section]
\newtheorem{lemma}[theorem]{Lemma}
\newcommand{\be}{\begin{equation}}
\newcommand{\ee}{\end{equation}}
\newcommand{\bea}{\begin{eqnarray}}
\newcommand{\eea}{\end{eqnarray}}
\newcommand{\beas}{\begin{eqnarray*}}
\newcommand{\eeas}{\end{eqnarray*}}
\begin{document}

\title{Coins of Three Different Weights}

\author{Tanya Khovanova\\MIT \and Konstantin Knop\\Youth Math School of St.Petersburg State University}
\maketitle

\begin{abstract}
We discuss several coin-weighing problems in which coins are known to be of three different weights and only a balance scale can be used. We start with the task of sorting coins when the pans of the scale can fit only one coin. We prove that the optimal number of weighings for $n$ coins is $\lceil 3n/2\rceil -2$. When the pans have an unlimited capacity, we can sort the coins in $n+1$ weighings. We also discuss variations of this problem, when there is exactly one coin of the middle weight.
\end{abstract}

\section{Introduction}

We discuss several coin-weighing puzzles. Here is the setup for all of them.

\begin{quote}
There are $n$ coins that look the same. But they have different weights. The number of different possibilities for their weights is three. The pans are one of two types: tiny (allow only one coin)  or huge (allow any number of coins). Use the balance scale the smallest number of times for one of two goals: sort the coins into their weights or find a particular coin.
\end{quote}

There is a lot of literature in the case when there are two different weights. In this case usually one weight is designated as real and the other as counterfeit. We cannot possibly cover all available articles on the subject, so we just list some of them.

The first publication was by E. D. Schell in the January 1945 issue of the American Mathematical Monthly \cite{Schell}. It discussed a problem of finding one fake coin that is lighter than real coins out of 9 coins total. The problem of one fake coin out of 12 coins when it is not known whether the fake coin is heavier or lighter appeared almost at the same time \cite{Eves}. The solution generated a lot of publications \cite{Descartes, Goodstein, Grossman}.

The generalization of the latter puzzle for any number of coins is covered in \cite{Dyson, Fine}. There is also a variation where we need to find the fake coin, but do not need to tell whether is it heavier or lighter \cite{Dyson, Mauldon}.

In \cite{Pyber, AL} bounds close to $\log_3n$ were found for finding the fake coins if the number of fake coins is limited by a small fixed number. The case when we do not know the number of fake coins is covered by \cite{Hu, Purdy}.

A set of papers were written to answer the question of whether or not all the coins are of the same weight \cite{AKV, AK, AV, KV}. Another direction of this problem is when you are allowed to use several balance scales in parallel \cite{URTour, Knop, K}.

A survey of coin-weighing problems was written by R.~K.~Guy and R.~J.~No\-wa\-kowsky \cite{Guy} and a classification of different coin-weighing problems by E.~Purdy \cite{Purdy}.

We did not find papers discussing three different weights in a general setting. We only found some puzzles in the book by J.-C.~Baillif \cite{Baillif}.

Throughout the paper we define \textit{sorting} of the coins as partitioning of the coins into groups, where the coins in each group weigh the same, the coins in different groups have different weights, and for every two groups we know which one contains lighter coins. We start with general theory in Section~\ref{sec:gt}. Then we discuss particular problems:

\begin{itemize}
\item Section~\ref{sec:twoweights}: Pans are tiny, the number of possible weights is 2, the goal is to sort. We prove that the number of weighings is not more than $n-1$ and at least $\lceil n/2\rceil$
\item Section~\ref{sec:threeweights}: Pans are tiny, the number of possible weights is 3, the goal is to sort. We prove that sorting can be done in not more than $\lceil 3n/2 \rceil -2$ weighings, and it is impossible to guarantee fewer weighings.
\item Section~\ref{sec:largerpansmiddleweight}: Pans are huge, the number of possible weights is 3, the goal is to sort. Additional constraint: one coin of the middle weight. We prove that  the coins can be sorted in $n$ weighings.
\item Section~\ref{sec:findmiddle}: Pans are huge, the number of possible weights is 3. Additional constraint: one coin of the middle weight. The goal is to find the middle coin. We prove that  the middle-weight coin can be found in $\lceil n/2 \rceil + \log _3 n$ weighings.
\item Section~\ref{sec:threeweighthugepans}: Pans are huge, the number of possible weights is 3, the goal is to sort. We prove that  the coins can be sorted in $n+1$ weighings.
\end{itemize}

\section{General Theory}\label{sec:gt}

Suppose we have $n$ coins of three different weights $w_1$, $w_2$, and $w_3$, where $w_1 < w_2 < w_3$. The total number of different possibilities of assigning these weights to coins is $3^n$. We cannot always differentiate some of these possibilities using just a balance scale. For example, if all the coins are of the same weight, there is no way to decide whether this weight is $w_1$, $w_2$, or $w_3$. The number of different possibilities that are distinguishable by a balance scale is known to be $3^n - 2\cdot 2^n +2$; see sequence A101052 in the On-Line Encyclopedia of Integer Sequences (OEIS) \cite{OEIS}: 1, 3, 13, 51, 181, $\ldots$.

Each weighing divides the space of possibilities into three parts: the coins in both pans weigh the same, the coins in the left pan are lighter, and the coins in the left pan are heavier. For optimality we want these parts to be of equal or close to equal sizes. The following lower bound follows:

\begin{lemma}
For $n>2$, the number of needed weighings is at least $n$.
\end{lemma}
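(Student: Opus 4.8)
The plan is to make precise the counting bound sketched immediately before the statement. Any weighing procedure, adaptive or not, can be modeled as a ternary decision tree: each weighing has exactly three possible outcomes (left pan lighter, balance, left pan heavier), so a procedure that uses at most $k$ weighings corresponds to a tree of depth at most $k$ and therefore has at most $3^k$ leaves. Two configurations of coins that the scale can ever tell apart must be routed to different leaves, since otherwise the procedure produces the same transcript of outcomes on both inputs and cannot output two different sortings. Hence a correct sorting procedure must have at least as many leaves as there are distinguishable configurations.

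By the count quoted above, the number of configurations distinguishable by a balance scale is $N = 3^n - 2\cdot 2^n + 2$, so every valid procedure satisfies $3^k \ge N$. It therefore suffices to show that $N > 3^{n-1}$ whenever $n > 2$: this yields $3^k > 3^{n-1}$, hence $k > n-1$, and since $k$ is an integer, $k \ge n$. The inequality $N > 3^{n-1}$ is equivalent, after rearranging $3^n - 2\cdot 2^n + 2 > 3^{n-1}$, to $3^{n-1} > 2^n - 1$. I would prove this last estimate directly from growth rates: for $n \ge 3$ one has $(3/2)^{n-1} \ge (3/2)^2 = 9/4 > 2$, so $3^{n-1} > 2\cdot 2^{n-1} = 2^n > 2^n - 1$. (A one-line induction on $n$ works equally well, with base case $n=3$ reading $9 > 7$.)

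I expect the only genuinely delicate point to be the reduction step rather than the arithmetic: one must be sure that ``distinguishable by the scale'' is exactly the notion that forces distinct leaves, and that the quoted value of $N$ counts precisely these equivalence classes and not all $3^n$ raw weight assignments. It is also worth recording why the hypothesis $n > 2$ cannot be dropped: for $n = 2$ one has $N = 3 = 3^{n-1}$, so the strict inequality fails and a single weighing already suffices, consistent with the $n = 2$ analysis in Section~\ref{sec:twoweights}.
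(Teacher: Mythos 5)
Your proof is correct and follows exactly the paper's approach: the paper's one-line proof is precisely the inequality $3^n - 2\cdot 2^n + 2 > 3^{n-1}$, relying on the preceding discussion of the distinguishable-configuration count and the three-way split of each weighing, which you have merely made explicit via the decision-tree formalization. Your verification of the arithmetic and your remark on why $n>2$ is necessary (equality $N = 3^{n-1}$ at $n=2$) are both accurate.
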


\begin{proof}
It is enough to see that $3^n - 2\cdot 2^n +2 > 3^{n-1}$, for $n > 2$.
\end{proof}

For $n=1$, we do not need any weighings. For $n=2$ comparing the two coins to each other is the only weighing that is needed.

For $n > 2$ we argue that the most profitable use of the scale for the first weighing is to compare one coin against one coin. For simplicity, we assume that the weights $w_1$, $w_2$ and $w_3$ are linearly independent over integers. That means if the scale balances with $k_1$ coins on the left pan and $k_2$ coins on the right pan, then $k_1=k_2$ and the partition of coins into different weights is the same for both pans.

Suppose we pick two coins, $a$ and $b$, that we did not weigh yet and compare them against each other. If $a$ is lighter than $b$, then the possibilities of the weight distributions for these two coins are: $(w_1,w_2)$, $(w_1,w_3)$, and $(w_2,w_3)$. Similarly, if $a$ is heavier than $b$, then the possibilities are $(w_2,w_1)$, $(w_3,w_1)$, and $(w_3,w_2)$. If $a$ and $b$ weigh the same, then we have $(w_1,w_1)$, $(w_2,w_2)$, and $(w_3,w_3)$. That means at the beginning, when we do not have any more information, comparing pairs of coins is effective: it divides the space of possibilities into three equal parts. We will show that using more coins in the first weighing divides the space less equally.

Let us see what happens if instead of comparing one coin to one coin, we compare two coins against two coins. By symmetry, the two unbalanced outcomes divide the space of possibilities equally. Now we count the number of possibilities if the pans are balanced. If the two coins on the left pan are of the same weight, then the total weight on each pan is $2w_1$, $2w_2$ or $2w_3$. For each of these total weights all four coins are uniquely determined. So these are 3 possibilities.

Now suppose the two coins on the left pan are of different weights, for a total of $w_1+w_2$, $w_1+w_3$, or $w_2+w_3$. Then the left and the right pan have the same distribution of coins, but we can interchange the two coins in each pan. Thus for every total weight we have 4 possibilities. So these are 12 possibilities.

The total number of possibilities for the weights when the pans are balanced is 15, which is less than 27: the desired third of all possibilities of weights of four coins. 

Thus comparing two coins against two coins is not as good a strategy as comparing one coin against one coin. Now we generalize this observation to more than two coins.

\begin{theorem}
If $k$ coins on each side balance, then there are $ \sum_{i \mathop = 0}^k \binom k i^2 \binom {2 i} i$ possibilities for the distribution of weights among these $2k$ coins.
\end{theorem}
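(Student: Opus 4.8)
The plan is to first use the linear-independence hypothesis to convert the balance condition into a purely combinatorial counting problem, and then to stratify the count by the number of middle-weight coins. Since $w_1,w_2,w_3$ are linearly independent over the integers and each pan holds exactly $k$ coins, the two pans balance if and only if they carry the same number of coins of each weight; that is, the left pan has $a$ coins of weight $w_1$, $b$ of weight $w_2$, and $c$ of weight $w_3$, and the right pan has the identical triple $(a,b,c)$. This is the same reasoning already used above for the two-against-two case. Because the coins are distinguishable by position (as the factor of $4$ in the two-against-two computation reflects), I am counting weight-assignments to the $2k$ fixed positions subject to this matching constraint.

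First I would stratify by $i$, the number of coins on one pan whose weight is \emph{not} $w_2$, i.e.\ the number of $w_1$- or $w_3$-coins on the left pan; by the matching constraint this equals the number of such coins on the right pan. For a fixed value of $i$ I choose which $i$ of the $k$ left positions carry a non-middle coin and, independently, which $i$ of the $k$ right positions do, the remaining $k-i$ positions on each pan being forced to weight $w_2$. This contributes a factor $\binom{k}{i}^2$.

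It then remains to distribute the weights $w_1$ and $w_3$ among the $2i$ non-middle coins while preserving balance. Balance of the $w_2$-coins is automatic, and since each pan already has exactly $i$ non-middle coins, it suffices that the two pans carry the same number $j$ of $w_1$-coins (the $w_3$-counts then agree as well). There are $\binom{i}{j}$ ways to place the $w_1$-coins among the $i$ non-middle positions on the left and $\binom{i}{j}$ on the right, so summing over $j$ gives $\sum_{j=0}^{i}\binom{i}{j}^2 = \binom{2i}{i}$ by Vandermonde's identity. Multiplying the two factors and summing over $i$ produces $\sum_{i=0}^{k}\binom{k}{i}^2\binom{2i}{i}$, as claimed.

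I expect no genuine obstacle here: the conceptual crux is recognizing that the balance condition decouples across the three weight classes, which lets the distribution of the two extreme weights collapse into the single Vandermonde convolution $\sum_{j}\binom{i}{j}^2 = \binom{2i}{i}$, with everything else being bookkeeping. The one point requiring care is correctly identifying the stratifying parameter so that the resulting sum matches the stated indexing by $i$; choosing instead the number of middle-weight coins $m=k-i$ gives the equivalent form $\sum_{m}\binom{k}{m}^2\binom{2(k-m)}{k-m}$. As a sanity check, the formula returns $3$ for $k=1$ and $1+8+6=15$ for $k=2$, matching the one-against-one and two-against-two counts found earlier.
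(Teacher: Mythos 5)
Your proof is correct and takes essentially the same route as the paper's: both reduce the balance condition to equal weight-compositions on the two pans (via linear independence) and then evaluate the count using the identity $\sum_j \binom{m}{j}^2 = \binom{2m}{m}$. The only cosmetic difference is that you stratify by the number of non-middle coins and apply the identity to the $w_1$/$w_3$ split, whereas the paper stratifies by squared multinomial coefficients indexed by the full composition and applies the identity to the $w_2$/$w_3$ split before re-indexing; the algebra is identical up to relabeling.
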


\begin{proof}
For each partition of $k$ into three non-negative integers $k_1$, $k_2$, and $k_3$, where $k_1+k_2+k_3=k$, the number of possibilities is $(\frac{k!}{k_1! k_2! k_3!})^2 = \binom{k}{k_1}^2 \binom{k-k_1}{k_2}^2$. The formula for the sum of squares of binomial coefficients is well known \cite{Spiegel}: 

$$\sum_{i \mathop = 0}^m \binom m i^2 = \binom {2 m}{m}.$$ 

Thus, the total is 

\begin{multline*}
\sum_{k_1+k_2+k_3=k}\binom{k}{k_1}^2 \binom{k-k_1}{k_2}^2= \sum_{k_1=0}^k \binom{k}{k_1}^2 \left(\sum_{k_2=0}^{k-k_1} \binom{k-k_1}{k_2}^2\right) = \\
\sum_{k_1=0}^k \binom{k}{k_1}^2 \binom {2(k-k_1)} {(k-k_1)}=\sum_{i = 0}^k \binom k i^2 \binom {2 i} i.
\end{multline*}
\end{proof}

With three coins ($k=3$) in each pan, assuming linear independence of the three weights, we get 93 possibilities for balancing; for 4 coins we get 639 possibilities. This is the sequence A002893 in the OIES. Let $a_n$ denote the number of possibilities for the weights when we weigh $n$ coins against $n$ coins and they balance. The total number of possibilities for the weights of $2n$ coins is $3^{2n}$. Let $p_n = a_n/3^{2n}$ be the ratio. We already saw that $p_1=1/3 \approx 0.333$, $p_2 = 15/81 \approx 0.185$, $p_3 = 93/729 \approx 0.128$ and so on.

\begin{theorem}
The sequence $p_n$ decreases.
\end{theorem}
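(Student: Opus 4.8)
The plan is to read $p_n$ as a collision probability and then use a one-step convolution argument. Assign each of $n$ coins an independent weight, uniform on $\{w_1,w_2,w_3\}$, and let $X_n=(K_1,K_2,K_3)$ be the vector recording how many coins receive each weight, so that $P(X_n=x)=3^{-n}\binom{n}{k_1\,k_2\,k_3}$ for a composition $x=(k_1,k_2,k_3)$ of $n$. Taking a second independent copy $Y_n$ with the same law,
\[
P(X_n=Y_n)=\sum_x P(X_n=x)^2=\frac{1}{9^n}\sum_{k_1+k_2+k_3=n}\binom{n}{k_1\,k_2\,k_3}^2=\frac{a_n}{9^n}=p_n,
\]
where the third equality is the formula for $a_n$ from the previous theorem. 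Writing $D_n=X_n-Y_n$, this reads $p_n=P(D_n=0)$. Since $p_{n+1}<p_n$ is equivalent to $a_{n+1}<9a_n$, it suffices to compare these two collision probabilities.

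Next I would set up a recurrence by adding one coin to each sample. Since $D_{n+1}=D_n+(U-V)$, with $U,V$ independent and uniform on the three unit count-vectors $e_1,e_2,e_3$ and independent of $D_n$, conditioning on the pair $(U,V)$ gives
\[
p_{n+1}=P(D_{n+1}=0)=\frac{1}{3}\,P(D_n=0)+\frac{1}{9}\sum_{i\neq j}P(D_n=e_i-e_j),
\]
because $U=V$ with probability $1/3$, while $V-U=e_j-e_i$ with probability $1/9$ for each of the six ordered pairs $i\neq j$.

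The crux is that the law of $D_n$ is strictly peaked at the origin: $P(D_n=\delta)\le P(D_n=0)$ for every $\delta$, with strict inequality when $\delta\neq 0$. This is a single application of Cauchy--Schwarz, using that $Y_n$ has the same distribution as $X_n$:
\begin{multline*}
P(D_n=\delta)=\sum_x P(X_n=x+\delta)\,P(X_n=x)\\
\le\Big(\sum_x P(X_n=x+\delta)^2\Big)^{1/2}\Big(\sum_x P(X_n=x)^2\Big)^{1/2}=P(D_n=0).
\end{multline*}
Equality would force $P(X_n=x+\delta)=P(X_n=x)$ for all $x$ (the constant of proportionality must be $1$, as both sides are probability mass functions), which is impossible for a nonzero shift of a finitely supported law. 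Feeding the six strict inequalities $P(D_n=e_i-e_j)<p_n$ into the recurrence yields
\[
p_{n+1}<\frac{1}{3}p_n+\frac{1}{9}\cdot 6\,p_n=p_n,
\]
which is exactly the assertion.

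The only delicate point is the strictness of the peak at the origin, i.e. ruling out the equality case of Cauchy--Schwarz; everything else is bookkeeping with the multinomial law. An alternative route would express $p_n=(2\pi)^{-2}\int_{[0,2\pi]^2}\big(|1+e^{i\theta_1}+e^{i\theta_2}|^2/9\big)^n\,d\theta_1\,d\theta_2$ by Fourier inversion on the lattice and then conclude from the pointwise inequality $0\le|1+e^{i\theta_1}+e^{i\theta_2}|^2/9\le 1$, with equality to $1$ only at the origin; but the Cauchy--Schwarz argument is more elementary and avoids any appeal to harmonic analysis.
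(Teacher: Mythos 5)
Your proof is correct, and it takes a genuinely different route from the paper's. The paper reduces the claim to $a_{n+1}<9a_n$ and proves that by induction on the three-term recurrence $(n+1)^2 a_{n+1} = (10n^2+10n+3)a_n - 9n^2 a_{n-1}$, which it quotes from the OEIS entry for A002893 without proof; granted that recurrence, the induction is three lines, but the argument is not self-contained, since the recurrence itself is a nontrivial fact. You instead read $p_n$ as the collision probability $P(X_n=Y_n)$ of two i.i.d.\ multinomial count vectors (using $a_n=\sum_{k_1+k_2+k_3=n}\binom{n}{k_1,k_2,k_3}^2$, which is exactly what the proof of the paper's preceding theorem establishes), write $p_{n+1}$ as a convex combination of the nine values $P(D_n=e_j-e_i)$ by conditioning on the two added coins, and use Cauchy--Schwarz to show the law of $D_n=X_n-Y_n$ is strictly peaked at $0$. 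The one delicate step, ruling out the equality case of Cauchy--Schwarz, you handle correctly: linear dependence forces the proportionality constant to be $1$ because both sides are probability mass functions, and no finitely supported distribution is invariant under a nonzero shift. What your approach buys is self-containedness and generality: the identical argument shows strict decrease of the analogous ratio for any number $m$ of weight classes (replace $e_1,e_2,e_3$ by $e_1,\dots,e_m$), whereas the paper's method would need a new recurrence for each $m$; what the paper's approach buys is brevity, once the OEIS recurrence is accepted. The only bookkeeping wrinkle in your write-up --- $P(D_n=e_i-e_j)$ versus $P(D_n=e_j-e_i)$ --- is harmless, since the sum runs over all ordered pairs (and $D_n$ is symmetric in law anyway).
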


\begin{proof}
We can show that $a_{n+1} < 9a_n$. As $a_1=3$ and $a_2=15$, this is true for $n=1$. We proceed by induction using the recurrence for $a_n$ \cite{OEIS}: $(n+1)^2 a_{n+1} = (10n^2+10n+3) a_n - 9n^2  a_{n-1}$. Let $a_n < 9a_{n-1}$. Then $(n+1)^2  a_{n+1} = (10n^2+10n+3)  a_n - 9n^2  a_{n-1}  < (10n^2+10n+3)  a_n - n^2  a_n = (9n^2+10n+3) a_n < 9(n+1)^2a_n$. Thus, $a_{n+1} < 9a_n$.
From this, $p_{n+1} = a_{n+1}/9^{n+1} < a_n/9^n=p_n$.
\end{proof}

Since $p_1=1/3$, we see that $p_n < 1/3$, for $n > 1$. This result allows us to formulate the following rule.

\textbf{Rule.} It is a good idea to start weighings with comparing one coin against one coin. 

Our reasoning applies not only to the first weighing. If there are two coins that have not been weighed yet, then comparing one against the other does not contradict potential optimality.

\textbf{General Rule.} It is a good idea to compare two coins that have not been weighed yet.

Most coin-weighing problems assume that the capacity of each pan is unlimited. But we see that at least at the beginning we do not need this capacity; we just need to fit one coin.

We define \textit{huge} pans as pans with an unlimited capacity and \textit{tiny} pans as pans that only fit one coin. 

Because of the importance of tiny pans, we start our study with them. After that we use our findings and methods for huge pans.

\section{Tiny Pans. Two Weights. Sort}\label{sec:twoweights}

As a warm-up we will cover the sorting problem with tiny pans and two different weights. 

\begin{quote}
There are $n > 1$ coins that look the same. But they are either real of fake. Real coins all weigh the same. Fake coins weigh the same as each other, but they are lighter than real coins. The balance scale allows only one coin on each pan. Use the balance scale the smallest number of times to sort the coins by weight.
\end{quote}

\begin{theorem}\label{thm:twoweights}
The optimal number of required weighings is not more than $n-1$ and at least $\lceil n/2\rceil$.
\end{theorem}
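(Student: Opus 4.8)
The plan is to prove the two bounds separately, with an explicit strategy for the upper bound and an adversary argument for the lower bound.

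For the upper bound I would exhibit a non-adaptive strategy that uses exactly $n-1$ weighings. Label the coins $1,2,\dots,n$ and compare coin $i$ against coin $i+1$ for each $i=1,\dots,n-1$. I claim these $n-1$ comparisons always determine the sorting. Since there are only two weights, each comparison of consecutive coins reports either ``equal'' or tells us which of the two is lighter. If every comparison reports ``equal,'' then all coins share a common weight and the sort is the single group $\{1,\dots,n\}$; here we cannot, and need not, decide whether that weight is real or fake. Otherwise at least one comparison is strict, say coin $i$ is lighter than coin $i+1$, and because only two weight values exist this pins coin $i$ as fake and coin $i+1$ as real. Propagating the equalities and inequalities along the chain then assigns every remaining coin to the fake or the real group, so $n-1$ weighings suffice.

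For the lower bound I would argue that every coin must be placed on the scale at least once, since a coin that is never weighed leaves no trace: its weight cannot influence any outcome. Concretely, run the adversary who answers ``equal'' to every weighing. Against this transcript the (possibly adaptive) strategy performs a well-defined sequence of weighings, and if it uses at most $\lceil n/2\rceil -1$ of them it touches at most $2(\lceil n/2\rceil -1)<n$ coin-slots (checking parity: $n-2$ for even $n$ and $n-1$ for odd $n$), so some coin $c$ is never weighed. Now compare two inputs consistent with this transcript: one in which all coins are fake, and one in which $c$ is real while the rest are fake. Both produce only ``equal'' outcomes on the weighings actually performed, since $c$ never sits on a pan; yet the first requires the output ``one group'' and the second requires ``two groups, with $c$ the heavier.'' The strategy cannot distinguish them, a contradiction, so at least $\lceil n/2\rceil$ weighings are needed.

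The routine part is the propagation argument for the upper bound. The step needing the most care is making the lower-bound adversary rigorous for adaptive strategies: one must check that fixing all answers to ``equal'' makes the sequence of chosen weighings determinate, that incidence counting (each weighing covers two coin-slots) genuinely forces an untouched coin $c$, and that the two inputs built around $c$ have identical transcripts but different required sorts.
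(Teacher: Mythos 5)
Your proposal is correct and follows essentially the same route as the paper: an explicit $(n-1)$-weighing strategy for the upper bound (you compare consecutive coins in a chain where the paper compares one fixed coin against all others, an immaterial difference) and the observation that every coin must touch the scale, giving $\lceil n/2\rceil$. Your adversary formalization of the lower bound is a rigorous elaboration of the paper's one-line counting argument, not a different idea.
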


\begin{proof}
Pick a coin and compare it to all other coins one by one. This strategy allows us to sort the coins in $n-1$ weighings. On the other hand, each coin needs to be put on the scale, and each weighing uses 2 coins. That means we need at least $\lceil n/2\rceil$ weighings.
\end{proof}

Suppose we have $n$ coins and the first weighing of two coins is unbalanced. In this case we sorted these two coins in one weighing, and the problem is reduced to $n-2$ coins. On the other hand, suppose the weighing is balanced. Then we can put aside one of the coins, but we have to compare the other coin to something else to figure out whether these are the heavy coins or the lighter ones. After the first weighing we reduced the problem to $n-1$ coins.

So the actual number of weighings depends on how many times we get an unbalanced weighing in the process. For example, if all coins weigh the same, then every weighing is balanced. In this case any algorithm will require $n-1$ weighings to sort the coins.

\section{Tiny Pans. Three Weights. Sort}\label{sec:threeweights}

Now let us go back to our main goal and consider three weights.

\begin{quote}
There are $n$ coins that look the same. But they have different weights. The number of different possibilities for their weights is three. The balance scale allows only one coin on each pan. Use the balance scale the smallest number of times to sort the coins.
\end{quote}

We will show that sorting can be done in $\lceil 3n/2 \rceil -2$ weighings, and it is impossible to guarantee fewer weighings. First let us produce a strategy that requires not more than $\lceil 3n/2 \rceil -2$ weighings.

\begin{theorem}\label{thm:tp3w}
There is a strategy that requires not more than $\lceil 3n/2 \rceil -2$ weighings.
\end{theorem}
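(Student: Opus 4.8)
The plan is to give an explicit incremental strategy and bound its weighing count by induction, processing the coins essentially two at a time and charging at most three weighings per pair. Throughout I maintain a \emph{sorted structure}: the coins examined so far are partitioned into their weight classes, these classes are linearly ordered (we know which is lighter), and we keep one representative coin from each class. Let $c \in \{1,2,3\}$ be the number of classes discovered so far. Since there are only three possible weights, the decisive structural fact is that once $c = 3$ every further coin must lie in one of the three known classes, so a single comparison against the middle representative $r_2$ classifies it (lighter, equal, or heavier give the three classes). This reflects the General Rule: comparing fresh coins is the efficient move.

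The core lemma is: given any current sorted structure, two fresh coins $x$ and $y$ can be inserted using at most three weighings, leaving a valid sorted structure. When $c = 3$ I simply compare each of $x$ and $y$ to $r_2$, classifying both in two weighings. When $c \le 2$ I first compare $x$ against $y$ and relabel so that $x \le y$; this preliminary weighing is exactly what makes the bound work. I then split on the outcome of comparing $x$ to a representative: either this already reveals a third class, in which case the constraint $x \le y$ pins down $y$'s class with at most one further comparison (no fourth class can exist), or no third class appears and at most one further comparison places both coins. The case analysis is short but must be carried out carefully; the configurations that actually reach three weighings are those in which the pair straddles or extends the two classes known when $c = 2$, and there the relation $x \le y$ is precisely what keeps the cost from reaching four.

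With the core lemma in hand the count is routine. For odd $n$ I start from a single coin ($0$ weighings, $c = 1$) and insert the remaining $n-1$ coins in $(n-1)/2$ pairs, for at most $3(n-1)/2 = \lceil 3n/2\rceil - 2$ weighings. For even $n$ I first compare two coins ($1$ weighing, from the empty structure, which costs only one rather than three because there are no representatives to match against) and then insert the remaining $n-2$ coins in $(n-2)/2$ pairs, giving at most
\[
1 + 3\cdot\frac{n-2}{2} \;=\; \frac{3n}{2} - 2 \;=\; \lceil 3n/2\rceil - 2 .
\]
The saving of $2$ over a naive "three per pair" bound comes precisely from this cheaper first step (or, for odd $n$, from seeding with a free single coin).

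The step I expect to be the main obstacle is the core lemma in the regime $c = 2$: here we know two ordered classes but not which two of the three weights they are, so inserting coins one at a time can cost two weighings apiece and push the total toward $2n$. Handling two coins together, with the preliminary comparison $x \le y$, is what caps each pair at three; the crux is checking that in every branch where a third class is newly revealed the order relation between $x$ and $y$ determines the second coin's class for free, using that no fourth class can exist. Verifying all sub-cases, and confirming that the parity bookkeeping lands exactly on $\lceil 3n/2\rceil - 2$ rather than one more, is where the real work lies.
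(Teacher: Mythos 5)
Your proposal is correct, but it takes a genuinely different route from the paper. The paper's proof is a batch, three-round argument: all coins are first compared in fresh pairs (about $\lfloor n/2 \rfloor$ weighings), producing a pile $L$ of coins that cannot be heaviest and a pile $H$ of coins that cannot be lightest, plus set-aside duplicates; each pile then has only two possible weights, so each is sorted by the earlier two-weight theorem (Theorem~\ref{thm:twoweights}) in one fewer weighing than its size; a possible leftover coin costs at most two more weighings. There the saving of $2$ comes from the two ``minus one''s in the second round, whereas in your scheme it comes from the cheap first pair. Your argument is instead incremental: maintain a sorted structure of discovered classes and insert two fresh coins at a time, with a core lemma capping each insertion at three weighings. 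I verified your core lemma; it is true, but one point your sketch leaves open actually matters. With $c=2$ known classes $a<b$ and fresh coins $x<y$, you must compare $x$ against the \emph{lighter} representative $r_a$ (or, symmetrically, $y$ against $r_b$). If instead you compare $x$ against $r_b$ and see $x<r_b$, then in the branch where the next weighing gives $x=r_a$, the relation of $y$ to $b$ remains undetermined (it can consistently be below, equal to, or above $b$), forcing a fourth weighing. With the right choice every branch closes in three; for instance, the outcome $x>r_a$ together with $x<y$ forces $a=w_1$, $x=w_2$, $y=w_3$, and the third weighing, $x$ against $r_b$, settles whether $b$ is the middle or the top class. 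As for what each approach buys: the paper's is modular (it reuses the two-weight result, and its round structure generalizes to $k$ weights, as the paper notes after the theorem), while yours is online --- coins may arrive in pairs and the structure is sorted at every moment --- at the price of a fiddlier case analysis whose correctness hinges on the choice of representative.
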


\begin{proof}
The strategy consists of three rounds. Before the first round we mark all coins as $U$ (meaning unknown). 

We start by comparing two coins. If they are not balanced, we mark the lighter coin as $L$ and the heavier as $H$. If two coins balance, we put one of them aside, removing its mark, and return the other one to the pile of coins marked $U$. For every coin put aside we keep track of which coin it balanced with. 

We keep weighing pairs of coins marked $U$ until at most one coin is left, at which point the first round ends. After the end of the round we have:

\begin{itemize}
\item two piles of the same size: one from the lighter pan marked $L$ and the other from the heavier pan marked $H$,
\item maybe one extra coin marked $U$,
\item some coins set aside. The weight of each set-aside coin matches one of the marked coins.
\end{itemize}

Note that in the first round a balanced weighing is profitable. The set-aside coins do not need to participate in later rounds: Their fate is decided with one weighing per coin, which is better than our goal of 1.5 weighings per coin. Hence we can assume the worst, that all the weighings in the first round are unbalanced.

In the worst case we have two piles of size $\lfloor n/2 \rfloor$ at the end of the first round. The first pile has only lighter and middle coins, and the second pile has only heavier and middle coins. Also, we might have one extra coin we know nothing about.

In the second round we process the lighter pile and the heavier pile separately. By Theorem~\ref{thm:twoweights} we need not more than $\lfloor n/2 \rfloor - 1$ weighings to process the pile from the lighter pan and the same number of weighings to process the heavier pile.

As we are assuming that all the weighings are unbalanced in the first round, we can get the extra coin only if $n$ is odd. The third round is devoted to processing the extra coin. If by this time we have piles of three different weights, we can compare the last coin to a middle-weight coin and establish its weight. If by this time we have piles of only two different weights, we need to compare the extra coin to one coin from each of the other two piles and thus establish to which weight each pile belongs.

Summing up $\lfloor n/2 \rfloor$ for comparing unknown coins (the first round) and $2(\lfloor n/2 \rfloor - 1)$ for sorting coins in the heavier and lighter piles (the second round) and 2 for the extra coin if the number of coins is odd (third round), we get: $3n/2-2$ for even $n$ and $3\lfloor n/2 \rfloor$ for odd $n$. We can combine this to $\lceil 3n/2 \rceil -2$ for any $n$.
\end{proof}

Let is prove that this strategy is one of the optimal ones.

\begin{theorem}\label{thm:tp3wguarantee}
We cannot guarantee sorting in fewer than $\lceil 3n/2 \rceil -2$ weighings.
\end{theorem}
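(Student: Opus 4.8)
The plan is to argue by an adversary, since the two obvious counting bounds are both too weak. The information-theoretic estimate from the General Theory section (a depth-$d$ ternary decision tree has $3^d$ leaves) gives only $\ge n$, and the cheapest certificate for any fixed three-class outcome is a suitably labeled spanning tree on the $n$ coins, which also has only $n-1$ edges (connect each class internally by balanced edges, then order the three classes with two unbalanced edges). Hence the extra $\lceil n/2\rceil-1$ weighings cannot come from the number of outcomes nor from verifying a single outcome; they must be forced \emph{adaptively}. I would therefore view each weighing as an edge of a (multi)graph $G$ on the $n$ coins, labeled ``$=$'' for a balance and ``$<$''/``$>$'' for an unbalance, and design an adversary that keeps its replies consistent with a genuine three-class instance while maximizing the number of edges it extracts.

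The mechanism I would exploit is the gap between \emph{connectivity} and \emph{resolution}. As in the two-weight discussion (cf.\ Theorem~\ref{thm:twoweights} and the all-equal case), certifying that coins are mutually equal requires a path of balanced edges, and certifying the global order requires $G$ to be connected; together these force a spanning structure of $n-1$ edges. The feature special to three weights is that two coins joined in $G$ only through a ``mixed'' path need not have a known mutual relation: if $a<c$ and $b<c$ with $c$ established as heavier, then $a$ and $b$ may still be a $w_1$ and a $w_2$, so the solver must compare them \emph{directly}, adding an edge that closes a cycle in $G$. The adversary I would build answers ``fresh vs.\ fresh'' comparisons as unbalanced, tentatively splitting the coins into a lighter part and a heavier part of sizes $\lfloor n/2\rfloor$, and then, inside each part, keeps alive the $w_1$-vs-$w_2$ (respectively $w_2$-vs-$w_3$) ambiguity, so that every pair the solver wishes to order inside a part is already connected through the other part and therefore costs a cycle-closing edge. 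This is precisely the source of the $\lfloor n/2\rfloor-1$ extra weighings, mirroring the three rounds of Theorem~\ref{thm:tp3w}.

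To turn this into a bound I would introduce a potential $\Phi$ equal to the number of connected components of $G$ plus a term counting the coins whose within-part order is still undetermined, arrange that $\Phi$ starts $\lceil 3n/2\rceil-2$ above its value in any fully sorted state, and prove that no single weighing can decrease $\Phi$ by more than $1$ under the adversary's replies. The two unbalanced outcomes and the balanced outcome must be treated separately, and the odd-$n$ case must be handled by letting the single left-over coin be the unique middle coin, which the adversary forces to be compared once downward and once upward (two weighings), producing the ceiling. Summing the per-step bound then yields at least $\lceil 3n/2\rceil-2$.

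The step I expect to be the real obstacle is showing that the adversary can never be ``caught'': its replies must remain consistent with some three-class assignment at all times, and, more delicately, the solver must be unable to use the three-weight-specific deductions—transitivity and the \emph{squeezing} inference, whereby a coin known to lie strictly between an already-identified lighter class and heavier class is pinned down with no direct comparison—to avoid the cycle-closing edges the potential is charging for. Equivalently, I must prove that a single balanced or unbalanced reply, even when it triggers a cascade of transitive deductions, still lowers $\Phi$ by at most $1$; and I must prevent the solver from amortizing the separation phase against the two resolution phases, i.e.\ show that the three contributions $\lfloor n/2\rfloor$, $\lfloor n/2\rfloor-1$, and $\lfloor n/2\rfloor-1$ genuinely add. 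Making the potential robust to arbitrary interleavings of comparisons, rather than to the clean round structure, is where the bulk of the work will lie.
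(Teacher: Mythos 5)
Your high-level plan---an adversary argument with a potential that drops by at most one per weighing---is exactly the route the paper takes, but as written the proposal has two genuine gaps, both of which you flag as ``the real obstacle'' and ``where the bulk of the work will lie'' without resolving them; identifying the hard step is not the same as doing it. First, your adversary is only half-specified: you say how it answers comparisons between two fresh coins, but not how it answers any other query (fresh vs.\ previously-lighter, previously-lighter vs.\ previously-heavier, two previously-lighter coins, and so on). The paper's adversary is a complete reply table over labels $U$ (never weighed), $L$ (always on the lighter or equal side), $H$ (always on the heavier or equal side): it decrees $U<U$, $U>L$, $U<H$, $L<H$, $L=L$, $H=H$. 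Once this table is written down, the consistency worry you raise dissolves (assign $w_1$ to all $L$ coins, $w_3$ to all $H$ coins, $w_2$ to all $U$ coins and check each row; an $L$ coin is never shown heavier than anything, an $H$ coin never lighter), and the per-step analysis becomes a finite case check over six weighing types rather than an open-ended argument about ``cascades of transitive deductions.''

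Second, your potential is never actually defined, and the natural readings of it fail. The paper's potential is $s=\tfrac{3}{2}u+l+h$, where $u$ is the number of $U$ coins and $l$, $h$ are the numbers of connected components of the \emph{equality} graphs inside the $L$ and $H$ groups; the fractional coefficient is the essential idea. It makes a $U<U$ weighing cost exactly $1$ (the $u$-term drops by $3$ but each of $l$, $h$ gains a component), a weighing of a $U$ coin against a marked coin cost only $\tfrac12$, an $L<H$ weighing cost $0$, and a component-merging $L=L$ or $H=H$ weighing cost at most $1$. Then $s$ starts at $\tfrac32 n$, the adversary can hold out until $u=0$ and each of $L$, $H$ still has two equality components ($s=4$), and two further weighings remain forced, giving $\tfrac32 n-2$; integrality alone yields the ceiling, so your separate odd-$n$ device is unneeded. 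A potential of the shape you propose---number of components of the comparison graph plus an integer count of ``order-undetermined'' coins---cannot reproduce this accounting: with components starting at $n$ and ending at $1$, the second term would have to supply about $n/2$ more, yet any per-coin integer count that does so makes some single weighing (one involving exactly one fresh coin) decrease $\Phi$ by a full unit where the correct charge is $\tfrac12$, and the resulting ``lower bound'' would exceed the matching upper bound of Theorem~\ref{thm:tp3w}, so the claimed per-step property must fail somewhere. In short: the skeleton is right, but the two ingredients that constitute the proof---the complete adversary table and the half-weighted potential---are missing, and they are precisely what you deferred.
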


\begin{proof}
Consider an adversary who can control the outcome of each weighing and wishes to increase the number of weighings. Here is the adversary's strategy. At every point in the weighing process the adversary labels each coin by one of the letters $U$, $L$, and $H$.
\begin{itemize}
\item $U$ --- the coin did not participate in any weighing yet.
\item $L$ --- the coin participated in weighings and was lighter.
\item $H$ --- the coin participated in weighings and was heavier.
\end{itemize}

The adversary chooses the outcome of every weighing by the following rules:

\begin{enumerate}
\item $U < U$ --- Two $U$ coins are always unbalanced.
\item $U > L$ --- The $U$ coin is always heaver than an $L$ coin.
\item $U < H$ --- The $U$ coin is always lighter than an $H$ coin.
\item $L < H$ --- An $L$ coin is always lighter than an $H$ coin.
\item $L = L$ --- Two $L$ coins always balance.
\item $H = H$ --- Two $H$ coins always balance.
\end{enumerate}

The rules are chosen to maximize the number of weighings. For example, if our strategy requires comparing two $U$ coins, then it is not profitable for the adversary to balance them, so the adversary makes one of them, say left, be lighter than the other.

This adversary's strategy guarantees that the labeling is consistent. A coin in group $L$ is never heavier than another coin and similarly for group $H$.

Let us define the \textit{equality} graph of the group marked $L$. Each vertex of the graph corresponds to a coin labeled $L$. The vertices are connected if there was a weighing where the corresponding two coins were of equal weight. The number of connected components symbolizes how many coins in group $L$ we need to sort. Similarly, we define the equality graph for the $H$ group.

The adversary keeps track of three numbers: 
\begin{itemize}
\item $u$ --- the total number of coins marked as $U$.
\item $l$ --- the number of connected components in the equality graph of the group labeled $L$.
\item $h$ --- the number of connected components in the equality graph of the group labeled $H$.
\end{itemize}

At the beginning all coins are of type $U$, and the corresponding triple of numbers $(u,l,h)$ is $(n,0,0)$.

After the $U<U$ weighing the lighter coin will be marked as $L$ but it has not been compared to any other $L$ coins, so the number of connected components in $L$ increases by 1. Suppose before the weighing the numbers were $(u,l,h)$; then after the weighing they become $(u-2,l+1,h+1)$. Similarly we can process what happens to the triple of numbers $(u,l,h)$ after a weighing of each of the six types above. All the results are combined in Table~\ref{tab:results}.

\begin{table}[htbp]
\centering
  \begin{tabular}{| c | c | c |}
\hline
Type & Weighing & Result \\
    \hline
    1 & $U<U$ &  $(u-2,l+1,h+1)$  \\ 
    2 & $U>L$ &  $(u-1,l,h+1)$  \\ 
    3 & $U<H$ &  $(u-1,l+1,h)$  \\ 
    4 & $L<H$ &  $(u,l,h)$  \\ 
    5 & $L=L$ &  $(u,l-1,h)$ or $(u,l,h)$ \\ 
    6 & $H=H$ &  $(u,l,h-1)$ or $(u,l,h)$ \\
    \hline
  \end{tabular}
\caption{Results of different types of weighings}
\label{tab:results}
\end{table}

Note that if the strategy requires us to compare two $L$ coins, then if they were from the same connected component, the number of components does not change. If they were from different components, the number of components decreases by 1. Similarly for two $H$ coins.

Consider the value $s=1.5u+l+h$. At the beginning we have only $U$ coins and thus $s=1.5n$. At the end we assume that the adversary stops fighting us when the $U$ group becomes empty and the groups $L$ and $H$ each are divided into two connected components. At this point $s=4$. Note that the only scenario when we cannot reduce the $L$ and $H$ equality graphs to two connected components is when we have exactly one $L$ and $H$ coin. In this case $s=2$.

Our weighings do not increase $s$. So to minimize the number of weighings, we need to maximize up the rate with which we decrease $s$.

Weighings of the first type decrease $s$ by 1. Weighings of type 5 or 6 decrease it by 1 or 0. Weighings of type 2 or 3 decrease it by 0.5. Weighings of type 4 are the least profitable for us: they do not change $s$ at all. The most profitable weighings are of type 1, 5 and 6, and the strategy should not weigh the coins that are in the same component of an $L$ or $H$ group. In any case, we cannot reduce $s$ faster than by 1 for each weighing. 

At the end, when the adversary stops controlling the weighing, if there are two connected components in the $L$ and $H$ group, we need one more weighing for each group to compare the components in that group. This is equivalent to saying that at the end we need $s=2$. 

Hence, the number of weighings is at least $1.5n-2$. As it is an integer, the number of weighings is at least $\lceil 3n/2 \rceil -2$.
\end{proof}

Note that the same method can be applied to any number of different weights, say $k$, to reduce the problem to a smaller number of weights. 
Each reduction happens in one round. After the first round the coins will be marked $L$, $H$ and $U$. In addition we will know that all the coins marked $U$ weigh the same, the coins marked $L$ are not of the heaviest weight, and the coins marked $H$ are not of the lightest weight. 

We start with comparing two coins. If they are not balanced, we mark the lighter coin as $L$ and the heavier as $H$. If two coins balance, we put one of them aside, and return the other one to the pile of unknown coins. For every coin put aside we keep track of which coin it balanced with. If there is an extra coin we can use one more weighing to assign it to either the $L$ or the $H$ pile. 

Note that we used $\lfloor n/2\rfloor + 1$ weighings, and the coins in the $L$($H$) pile cannot contain coins of the heaviest(lightest) weight. 

After this round we have reduced the problem to two piles each having at most $k-1$ different weights. In addition, we have some coins that are known to be equal to a coin in one of the piles.

In the second round we can compare coins within $L$ and within $H$ by using at most $\lfloor n/2\rfloor +2$ weighings. As a result we will get four new piles that we can combine into three piles:

\begin{itemize}
\item $LL$ does not contain coins of the two heaviest weights,
\item $LH$ and $HL$ do not contain coins of the heaviest or the lightest weights,
\item $HH$ does not contain coins of the two lightest weights.
\end{itemize}

The next round will require at most $\lfloor n/2\rfloor +3$ weighings. Continuing in this manner, after $k-2$ rounds we have divided our coins into $k-1$ piles each with coins of two different weights. Overall we use at most $(k-2)\lfloor n/2\rfloor +(k-1)(k-2)/2$ weighings. After that we need at most $n-(k-1)$ weighings to sort the piles of two different weights each. The final bound is $(k-2)\lfloor n/2\rfloor +(k-1)(k-2)/2 + n-(k-1) = k\lfloor n/2\rfloor +(n-2\lfloor n/2\rfloor)+ (k-1)(k-2)/2 - (k-1) \leq k\lfloor n/2\rfloor +1 + (k-1)(k-4)/2$.

Coin weighing with tiny pans is equivalent to sorting algorithms. Minimizing the number of times the scale is used in coin problems is equivalent to minimizing the number of times the comparison operation is used in sorting algorithms. The main difference is that when sorting, the standard assumption is that the elements of the list are all different. Traditionally in coin-weighing problems, the number of possible values for weights is very small. This extra assumption creates room for improving on known sorting algorithms. The algorithm we described above is a coin-weighing analogue of the Tournament sort \cite{wikiTS}.

\section{Huge pans. Three Weights. One Coin of the Middle Weight. Sort}\label{sec:largerpansmiddleweight}

Suppose we have three weights, huge pans and one coin of the middle weight.

\begin{quote}
There are $n$ coins that look the same. But they have different weights. There are only three possibilities for their weights and one coin of the middle weight. The balance scale allows any number of coins on each pan. Sort the coins.
\end{quote}

First we solve the case when we have some extra information: For every coin we know that it belongs to one of two classes:

\begin{itemize}
\item $L$: the coin is either lighter or middle weight.
\item $H$: the coin is either heavier or middle weight.
\end{itemize}

In this case we can find the middle coin in $\lceil \log _3 n \rceil$ weighings.

\begin{lemma}\label{thm:lightheavypiles}
There are $n>2$ coins that look the same. But they have different weights. There are only three possibilities for their weights. There is at least one coin of each weight and exactly one coin of the middle weight. For every coin, it is known which one of the two extreme weights it does not weigh. Then the coins can be sorted in $\lceil \log _3 n \rceil$ weighings.
\end{lemma}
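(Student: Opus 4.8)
The plan is to reduce the sorting task to the problem of identifying the single coin of the middle weight, and then to locate that coin by a ternary search in which each weighing is used to extract one of its three outcomes. First I would observe that once the unique middle coin is known, no further weighings are needed: every other coin labeled $L$ must then have weight $w_1$ and every other coin labeled $H$ must have weight $w_3$, so all three weight classes are determined at once, and they are nonempty because we are told there is at least one coin of each weight. Thus it suffices to find the middle coin in $\lceil \log_3 n\rceil$ weighings.

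The heart of the argument is a single observation about what I will call \emph{baseline-balanced} weighings: a weighing that carries the same number of $L$-coins on each pan and the same number of $H$-coins on each pan. Under the default reading $L\mapsto w_1$, $H\mapsto w_3$ the two pans then carry equal weight, so by linear independence of $w_1,w_2,w_3$ such a weighing balances unless the middle coin is on it. Since the middle coin is the only coin that deviates from the default---an $L$ middle coin being heavier than expected and an $H$ middle coin lighter---each of the three outcomes pins down a set of candidates. The scale balances exactly when the middle coin is off the scale; the left pan is heavy exactly when the middle coin is an $L$-coin on the left or an $H$-coin on the right; and symmetrically for a heavy right pan. If I load $2p$ coins labeled $L$ (split $p$ and $p$) and $2q$ coins labeled $H$ (split $q$ and $q$), then the two unbalanced classes each contain $p+q$ candidates and the balanced class contains the remaining $m-2(p+q)$, where $m$ is the current number of candidates.

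This lets me run a ternary search. Starting from all $n$ coins as candidates, at each step I choose $p$ and $q$ with $p+q$ as close as possible to $m/3$; this is feasible because $\lfloor A/2\rfloor+\lfloor B/2\rfloor\ge (m-2)/2$, where $A$ and $B$ count the $L$- and $H$-candidates. The three outcome classes then have sizes $p+q$, $p+q$, and $m-2(p+q)$, each at most $\lceil m/3\rceil$, and the observed outcome reveals which class contains the middle coin. Recursing on that class reduces the candidate count from $m$ to at most $\lceil m/3\rceil$, so after $\lceil \log_3 n\rceil$ weighings a single candidate remains, which is the middle coin.

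The main obstacle is the weighing design of the second paragraph: because an $L$ middle coin tips the scale toward its own pan while an $H$ middle coin tips it away, a naive group-versus-group comparison does not cleanly trisect the candidates. The balanced-count construction is exactly what repairs this, and it has the convenient side effect of forcing the two unbalanced classes to have equal size. What remains is only bookkeeping: the parity requirement that the loaded count of each label be even, together with the rounding of $m/3$, and a handful of small cases (for instance $m\le 5$, or $A$ or $B$ very small, where the scheme degenerates to weighing coins of a single label against one another). These I would verify directly, and they never push the total above $\lceil \log_3 n\rceil$.
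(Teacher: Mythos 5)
Your proposal is correct and takes essentially the same approach as the paper: your \emph{baseline-balanced} weighings (with $p$ coins labeled $L$ and $q$ labeled $H$ on each pan) are exactly the paper's comparison of two piles $A_1$ and $A_2$ of equal size with matching $L$- and $H$-counts, producing the same trisection of the candidate set and the same ternary recursion. The only differences are bookkeeping: the paper tracks candidate-pile sizes by powers of $3$ and resolves the endgame (a pile of at most three coins) by padding with already-classified coins, which is the same device your degenerate small-case, single-label weighings rely on.
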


\begin{proof}
Suppose $3^k < n \leq 3^{k+1}$. Divide all the coins into three piles $A_1$, $A_2$, and $A_3$, so that $|A_1| = |A_2|$ and $3^k \geq \max \{|A_1|,|A_2|,|A_3|\}$, where $|A_i|$ is the size of the pile $A_i$. In addition, make sure that the number of $L$ and $H$ coins is the same in piles $A_1$ and $A_2$.

Now weigh $A_1$ against $A_2$. If the weighing balances, then the middle coin is in $A_3$. If $A_1$ is lighter, then the middle coin is either one of the coins marked $L$ in $A_2$, or one of the coins marked $H$ in $A_1$. In any case, we are in the same situation as before, but the size of the pile that might contain the middle coin is reduced to not more than $3^k$.

At the very end we will have a pile of size not more than 3 containing the middle-weight coin. If the size of the pile is 3, then comparing two coins marked the same will reveal the middle coin. If the size of the pile is less than 3 we can add more coins to the pile to make it 3 and, as before, compare two coins marked the same.

In this setup finding the middle coin is equivalent to sorting.
\end{proof}

Now we can get back to the original problem.

\begin{theorem}
If there is one middle coin, then the coins can be sorted in $n$ weighings.
\end{theorem}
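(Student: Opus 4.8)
The plan is to reduce to Lemma~\ref{thm:lightheavypiles} by first labeling every coin with one of the two classes $L$ (not the heaviest weight) and $H$ (not the lightest weight), and then invoking the Lemma to finish the sort. The classification will be carried out by exactly the one-against-one ``tournament'' first round used in Theorem~\ref{thm:tp3w}: repeatedly compare two coins that have not yet received a definite class. The key simplification coming from the hypothesis of a unique middle coin is that \emph{a balanced comparison can never involve the middle coin}: two equal coins must be two equal extreme coins. Hence an unbalanced comparison labels both of its coins at once (lighter as $L$, heavier as $H$), while a balanced comparison exhibits two equal coins, one of which I set aside (recording that it equals the other) and return the other to the pool to be weighed again.

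First I would run this round until at most one coin is left unweighed. Each balanced weighing permanently removes one coin (the set-aside one), and each unbalanced weighing labels two coins; a coin that keeps balancing is returned and eventually either gets labeled in some unbalanced weighing or survives as the single leftover coin. So after the round every surviving coin is labeled $L$ or $H$ except possibly one leftover coin, which I label with one extra comparison against any already-labeled coin. Writing $c$ for the number of balanced weighings, the set $R$ of surviving (non-set-aside) coins has size $r = n - c$; it contains the middle coin and at least one coin of each weight, since every coin equals some coin of $R$. Thus $r \ge 3$ and the hypotheses of Lemma~\ref{thm:lightheavypiles} hold for $R$.

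The crucial idea in the accounting is to feed only $R$ --- not all $n$ coins --- to the Lemma: the set-aside coins inherit the weight of their representative once $R$ is sorted, so the balanced weighings ``pay for themselves'' by shrinking the Lemma's input. Applying the Lemma to $R$ costs $\lceil \log_3 r\rceil$ weighings. Counting everything (the round, the possible extra comparison, and the Lemma) and using $2a + c = n - e$ for the numbers $a$, $c$, $e\in\{0,1\}$ of unbalanced weighings, balanced weighings, and leftover coins, the total comes out to $n - r/2 + e/2 + \lceil \log_3 r\rceil$, with $e \equiv r \pmod 2$ forced by parity.

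The main obstacle --- and the place where the bound is tight --- is this final inequality. The whole argument succeeds if and only if the total is at most $n$, i.e. if and only if $\lceil \log_3 r\rceil \le \lfloor r/2\rfloor$ for every $r \ge 3$. This holds, with equality at $r = 3, 4, 5$, and it decides the proof; the conceptual content is recognizing that the two opposing effects balance out exactly to $n$: many balanced weighings make the round long but the Lemma's input tiny, while many unbalanced weighings make the round short but the input large. I would close the argument by checking the small cases $r = 3, 4, 5$ by hand and noting that $\log_3 r$ grows far more slowly than $r/2$ thereafter.
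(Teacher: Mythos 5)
Your proposal is correct and takes essentially the same route as the paper's own proof: the identical first round of one-against-one comparisons (labeling unbalanced pairs $L$/$H$, setting aside one coin of each balanced pair), one extra weighing for a possible leftover coin, and then an application of Lemma~\ref{thm:lightheavypiles} to the surviving labeled coins. Your accounting, with $r$ surviving coins and the inequality $\lceil \log_3 r\rceil \le \lfloor r/2\rfloor$, is exactly the paper's bound $\lceil \log_3 (2k+1)\rceil \le k$ under the substitution $r = 2k+e$; you merely make explicit two points the paper leaves implicit (that the unique middle coin can never balance and hence always survives into the Lemma's input, and that every set-aside coin is transitively equal to a surviving coin).
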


\begin{proof}
We start with the same method as in Theorem~\ref{thm:tp3w}. We compare two coins of unknown weights. If two coins are unbalanced we sort them into two classes: $L$ and $H$. If two coins balance we put one aside and the other back into the pile of unknown coins. We continue this procedure until not more than one unknown coin is left. If the $L$ and $H$ classes each contain $k$ coins we performed $k+n-2k-1=n-k-1$ weighings. We compare the unknown coin, if any, to one of the coins from the $L$ or $H$ class and assign the class value to it. 

After not more than $n-k$ weighings we have not more than $2k+1$ coins marked $L$ and $H$ that need to be further processed, and they satisfy the assumptions of Lemma~\ref{thm:lightheavypiles}. Thus we can process them in $\lceil \log _3 (2k+1) \rceil \leq k$ weighings.
\end{proof}

Note that the same method can be applied to a problem with three weights where the number of coins of the middle weight is small. First we use the method described in Section~\ref{sec:threeweights} and not more than $n$ weighings to separate coins into two piles so that each pile has coins of two possible weights. Then we use one of the known methods of finding the fake coins from a set of coins when the number of fake coins is small \cite{Pyber, AL}.

\section{Find the coin of middle weight}\label{sec:findmiddle}

In this variation instead of sorting the coins we need to find the middle-weight coin.

\begin{quote}
There are $n$ coins that look the same. But they have different weights. There are only three possibilities for their weights and exactly one coin of the middle weight. The balance scale allows any number of coins on each pan. Find the middle coin.
\end{quote}

\begin{theorem}\label{thm:findmiddleweight}
The problem of finding the only middle coin can be solved in $\lceil n/2 \rceil + \lceil \log _3 n \rceil$ weighings.
\end{theorem}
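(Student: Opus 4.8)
The plan is to mimic the two-phase structure used for the $n$-weighing sorting bound (Theorem~\ref{thm:tp3w} together with Lemma~\ref{thm:lightheavypiles}), but to stop as soon as the single middle coin is located rather than fully sorting. In the first phase I would repeatedly compare two coins that have not yet been weighed. Because there is exactly one coin of weight $w_2$, a balanced weighing can only mean that both coins equal $w_1$ or both equal $w_3$; in either case neither is the middle coin, so for the purpose of \emph{finding} the middle coin I may simply discard both. An unbalanced weighing sorts its two coins into the classes $L$ (lighter, hence $w_1$ or $w_2$) and $H$ (heavier, hence $w_2$ or $w_3$), which is exactly the classification required by Lemma~\ref{thm:lightheavypiles}. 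Since every weighing removes two coins from the unknown pile, this phase takes $\lfloor n/2\rfloor$ weighings and leaves at most one unpaired coin.

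The next step is to make sure the middle coin survives as a labeled candidate. The partner of the middle coin has a different weight, so that pair is unbalanced and the middle coin lands in $L\cup H$ --- unless the middle coin is the single leftover coin $u$. When $n$ is odd the budget $\lceil n/2\rceil$ permits one more weighing than the $\lfloor n/2\rfloor$ already spent, and I would use it to classify $u$ by comparing it against any coin $\ell$ from the $L$ pile: since $\ell\in\{w_1,w_2\}$ and the middle coin is unique, $u\le \ell$ forces $u=w_1$ (discard it), while $u>\ell$ shows $u\in\{w_2,w_3\}$, so $u$ joins the $H$ pile. In the degenerate case where no unbalanced weighing ever occurred (so $L$ and $H$ are empty), every paired coin is non-middle and the leftover $u$ must itself be the middle coin, finishing immediately. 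After this step we have a candidate set $S\subseteq L\cup H$ with $|S|\le n$, every coin carrying a known $L$/$H$ label, and $S$ contains the unique middle coin; for all but the smallest $n$ it also contains coins of both extreme weights, so it meets the hypotheses of Lemma~\ref{thm:lightheavypiles}.

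Finally I would invoke Lemma~\ref{thm:lightheavypiles} on $S$ to locate the middle coin in $\lceil\log_3|S|\rceil\le\lceil\log_3 n\rceil$ weighings. Summing the two phases gives $\lfloor n/2\rfloor+\lceil\log_3 n\rceil$ for even $n$ and $\lfloor n/2\rfloor+1+\lceil\log_3 n\rceil$ for odd $n$, and both equal $\lceil n/2\rceil+\lceil\log_3 n\rceil$, which is the claimed bound.

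The main obstacle is bookkeeping the middle coin rather than any hard estimate: I must verify that the coin of weight $w_2$ is never discarded in the first phase and that the reduced instance genuinely satisfies every hypothesis of Lemma~\ref{thm:lightheavypiles}. The most delicate point is the base case of the ternary search inside that lemma when $S$ is very small (say $|S|\le 2$), where resolving the last pile requires comparing two coins with the same label; here I would pad $S$ up to size three using a coin of \emph{known} exact weight, of which the earlier rounds supply plenty (an eliminated $L$ coin must be $w_1$ and an eliminated $H$ coin must be $w_3$). The finitely many tiny values of $n$ for which such a reference coin is not yet available can be checked directly.
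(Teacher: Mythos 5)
Your proposal is correct and follows essentially the same route as the paper's own proof: pair up unweighed coins, discard balanced pairs (which cannot contain the unique middle coin), spend one extra weighing on the odd leftover coin, and then run the ternary search of Lemma~\ref{thm:lightheavypiles} on $L\cup H$, giving $\lceil n/2\rceil+\lceil\log_3 n\rceil$ in total. Your treatment is in fact somewhat more careful than the paper's, which glosses over the edge cases you address (the leftover coin being the middle coin, empty $L$/$H$ piles, and padding tiny candidate sets with coins of known weight).
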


\begin{proof}
We start by using the method above of comparing pairs of two unknown coins. We can discard pairs of coins of equal weight as they cannot contain the middle coin. After $\lfloor n/2 \rfloor$ weighings we have a pile $L$ from the lighter pan that can contain only the lighter coins or the middle coin, a pile $H$ from the heavier pan that can contain only the heavier coins or the middle coin, and possibly a leftover coin, which could be anything. If there is a leftover coin we can use one weighing to assign it either to $L$ or to $H$. Up to this point we used $\lceil n/2 \rceil$ weighings.

Now we can apply Lemma~\ref{thm:lightheavypiles} to find the middle coin out of the union of $L$ and $H$ coins in not more than $\lceil \log _3 n \rceil$ weighings.
\end{proof}

This puzzle variation was inspired by a problem at the 2013 MIT Mystery Hunt \cite{TK}. Every year MIT runs a mystery hunt, where many teams compete in solving puzzles and finding a coin at the end. During the 2013 MIT Mystery Hunt the team designing the hunt, very appropriately, designed the last puzzle as a coin-weighing puzzle:

\begin{quote}
There are $9$ coins that look the same and only one of them is real. But they have different weights. There are only three possibilities for their weights. Four coins are heavier and four coins are lighter than the real coin. The balance scale allows any number of coins on each pan. Find the real coin in not more than seven weighings.
\end{quote}

Theorem~\ref{thm:findmiddleweight} proves that the puzzle can be solved in seven weighings and shows how to do it. Actually it is possible to find the real coin in six weighings. This is due to the fact that the number of coins of each weight is known and the number of lighter and heavier coins is even. We leave it to the reader to find the mysterious place to save on one weighing.

\section{Huge pans. Three Weights. Sort}\label{sec:threeweighthugepans}

Let us revisit the case of sorting three weights, but now with huge pans.

\begin{quote}
There are $n$ coins that look the same. But they have different weights. The number of different possibilities for their weights is three. The balance scale allows any number of coins on each pan. Use the balance scale the smallest number of times to sort the coins.
\end{quote}

\begin{theorem}
There is a strategy that requires no more than $n+1$ weighings.
\end{theorem}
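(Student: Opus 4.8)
The plan is to reduce the sort to two pieces: first identify a reference coin of the \emph{middle} weight, and then classify every remaining coin by a single three-way comparison against that reference (lighter $\Rightarrow$ light, balanced $\Rightarrow$ middle, heavier $\Rightarrow$ heavy). This second piece is the analogue of a Dutch-flag partition: it spends exactly one weighing on each coin other than the reference, for a total of about $n$ weighings, and it is forced to be this cheap precisely because each such weighing is fully informative (all three outcomes are genuinely possible). Consequently the whole budget of $n+1$ comes down to producing a middle reference using at most one weighing beyond those that already do classification work, which is consistent with the information-theoretic floor of $n$ established by the opening Lemma.

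To produce the references I would scan the coins while maintaining the set of distinct weights seen so far, each with a sorted representative, in the spirit of the first round of Theorem~\ref{thm:tp3w}. As long as only one weight has appeared, each comparison against the lone representative is harmless: it either matches (classifying the coin) or exposes a second weight. Matched coins can be set aside at one weighing apiece, as in Theorem~\ref{thm:tp3w} and Theorem~\ref{thm:twoweights}, and never touched again. The delicate situation is when exactly two weights $p<q$ are known but it is not yet clear which of them is extreme: a fresh coin heavier than $p$ might equal $q$ or might reveal a third weight, and distinguishing these can cost a second comparison. I would postpone that cost by deferral --- compare each new coin to $p$, divert every ``heavier than $p$'' coin to a holding pile, and continue. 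If some coin ever turns out lighter than $p$, then $p$ is the middle weight, the whole holding pile is heavy at no extra cost, and the middle reference is in hand; if no such coin ever appears, then $p$ is the global minimum and the holding pile is a pure two-weight problem.

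The main obstacle is precisely this two-weight phase: I must bound its overhead by a single extra weighing, so that the grand total is $n+1$ rather than the $\lceil 3n/2\rceil-2$ of the tiny-pan world. This is where huge pans are indispensable. Instead of paying a separate comparison to resolve each ambiguous coin, I would use a batch weighing --- weigh the holding pile against an equal number of copies of a known reference --- to detect in one stroke whether a third weight is present, and then locate an actual third-weight coin by a search of the same ternary flavor as in Lemma~\ref{thm:lightheavypiles}. Once that coin is found it fixes the middle reference, after which the Dutch-flag step finishes the job. The part I expect to be genuinely hard, and the heart of the proof, is the amortized accounting: charging the batch-detection and search weighings against the coins they ultimately classify, and exploiting that every matched or deferred coin is resolved for free, so that the few ``structural'' weighings collapse into the single $+1$ over the floor of $n$.
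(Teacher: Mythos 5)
Your overall target --- amortize to one weighing per coin plus a constant by resolving coins against known references --- is the right way to think about the bound, and your ``good branch'' (a coin lighter than $p$ eventually appears, so $p$ is the middle weight) does finish within budget. But the ``bad branch'' is a genuine gap, and it is not merely hard accounting: it is unfixable in the form you propose. First, the batch weighing is vacuous. The only reference you possess in many copies is $p$ itself; every coin in the holding pile is already known to be heavier than $p$, so weighing the pile against an equal number of $p$'s has a predetermined outcome and carries zero information. You have exactly one coin of weight $q$, so ``an equal number of copies of a known reference'' at weight $q$ does not exist, and a balance scale cannot simulate extra copies by padding with $p$'s. Second, the deferral itself breaks the budget. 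An adversary can answer your scan so that no coin ever comes up lighter than $p$; then the holding pile has $m \approx n-2$ coins, each of which has already consumed one weighing that established only ``heavier than $p$'' --- which does not even distinguish $q$ from a third weight. At that point the pile still admits roughly $2^{m+1}$ configurations (each coin is $q$ or the third weight, and the third weight may lie above or below $q$), all of which lead to different sorted answers, so by the outcome-counting argument of Section~\ref{sec:gt} you need at least $\log_3\bigl(2^{m+1}\bigr) \approx 0.63\,m$ further weighings --- but your budget leaves only two. No search of ``ternary flavor'' can close a gap that is linear in $n$.

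The paper avoids this trap by never comparing unknown coins to a fixed reference coin. Its first round compares unknown coins to each other, keeping the unbalanced pairs, and then it uses the huge pans to weigh \emph{pair against pair}. That is where the real amortization comes from: the three possible pair-sums $w_1+w_2 < w_1+w_3 < w_2+w_3$ are totally ordered, so when two unbalanced pairs balance, the coins match coin-for-coin (two coins resolved by one weighing), and the first time two pairs fail to balance, the lighter pair's light coin is $w_1$ and the heavier pair's heavy coin is $w_3$, producing a \emph{reference pair} of weight $w_1+w_3$ against which every remaining unbalanced pair is completely resolved in a single weighing. Each pair of coins thus costs two weighings total --- one inside round 1, one against the reference pair --- which is exactly the one-weighing-per-coin rate you were aiming for, with the few leftover coins ($d_1$, $c_2$, and a possible odd coin) absorbing the $+1$. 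If you want to salvage your framework, you must replace the single-coin scan by something of this pair-versus-pair kind; as written, your scheme cannot reach $n+1$.
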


\begin{proof}
The strategy consists of four rounds. The first round is similar to the first round in Theorem~\ref{thm:tp3w}. Before it we mark all coins as $U$.

\textit{Round 1.} 
We start by comparing two coins. If they are not balanced, we mark the lighter coin as $L$ and the heavier as $H$ and we keep track of this unbalanced pair. If two coins balance, we put one of them aside, removing its mark, and return the other one to the pile of coins marked $U$. For every coin put aside we keep track of which coin it balanced with. We keep weighing pairs of coins marked $U$ until at most one coin is left, at which point the first round ends. 

Every balanced weighing reduces the number of coins we need to sort by one and uses one weighing. After this round we will need to process only marked coins (all but the set-aside coins).

At the end of this round we will have $k_1$ marked coins. Out of them $\lfloor k_1/2 \rfloor$ are marked $L$ (lighter or middle weight), $\lfloor k_1/2 \rfloor$ are marked $H$ (heavier or middle weight), and there may be an extra coin marked $U$. We have $k_1+1- \lfloor k_1/2 \rfloor$ more weighings we are allowed to use.

\textit{Round 2. Compare unbalanced pairs against each other until the two pans do not balance.} 
Suppose $(c_1,d_1)$ is the first unbalanced pair, and $(c_2,d_2)$ is the second unbalanced pair. We denote the weight of a coin with the same letter as a coin. Suppose $c_1 < d_1$ and $c_2 < d_2$. Each unbalanced pair weighs $w_1+w_2$, $w_1+w_3$ or $w_2+w_3$. As $w_1+w_2 < w_1 + w_3 < w_2+w_3$, comparing pairs give a lot of information. Suppose the pair $(c_1,d_1)$ balances against $(c_2,d_2)$, then $c_1=c_2$ and $d_1=d_2$. That means that two coins $c_2$ and $d_2$ can be put aside, removing their marks as each of them has a match. We used a total of two weighings for these two put-aside coins, one in round 1 and one in round 2.

If all pairs balance and there is no extra coin, then all the coins are sorted in $n-1$ weighings. If there is an extra coin, we can use two weighings to compare it to an $L$-coin and an $H$-coin and finish the sorting. In this case we need $n+1$ weighings. 

Now we can assume that we found two pairs of coins that do not balance. By this time we have $k_2$ marked coins that we still need to process, and we have $k_2- \lfloor k_2/2 \rfloor$ more weighings we are allowed to use. 

Suppose $c_1+d_1 < c_2+d_2$ are the four coins that form two unbalanced pairs. That means $c_1=w_1$ and $d_2=w_3$. We can unmark these coins as we know their weights. Now $c_1$ and $d_2$ becomes our \textit{reference} pair. 

\textit{Round 3. Compare the remaining pairs against the reference pair.} 
We compare the remaining unbalanced pairs against the reference pair. After such a weighing we know the weights of the coins in the non-reference pair, so we can remove marks from them. 

After we process all pairs we will have some coins put aside that match other coins, some coins of known weight and in addition we will have the coins $d_1$ and $c_2$, and maybe an extra coin. We only need to process $d_1$, $c_2$ and the extra coin to assign all the weights. If there is no extra coin, we have two more weighings we can use. If there is an extra coin, we have three more weighings we can use.

We know that $d_1$ is classified as $H$, and $c_2$ as $L$. So we use one more weighing to compare $d_1$ against $d_2$. The latter is of weight $w_3$, so at the end of this weighing we will know the weight of $d_1$. Similarly we use one more weighing to find the weight of $c_2$ by comparing it against $c_1$.

\textit{Round 4. Compare the extra coin to a coin of the middle weight.} 

If there is an extra coin, we have one more weighing we are allowed to use. We can determine the weight of the extra coin in one weighing by comparing it to a middle-weight coin.

Notice that two unbalanced pairs guarantee us the existence of this middle-weight coin. Indeed, the pairs $(c_1,d_1)$ and $(c_2,d_2)$ cannot both be of weight $w_1+w_3$. That means at least one of the coins $c_2$ or $d_1$ must be of the middle weight $w_2$.
\end{proof}

\section{Acknowledgements}

We are grateful to Julie Sussman, P.P.A., for thoroughly reading our drafts and helping us clarify our presentation.

\bigskip
\hrule
\bigskip

\noindent 2000 {\it Mathematics Subject Classification}: Primary 00A08 (Recreational mathematics); Secondary 68P10 (Searching and sorting), 05C85 (Graph algorithms), 94C12 (Fault detection; testing).

\noindent \emph{Keywords: } weighing, coins, sort, puzzle.

\end{document}